\begin{document}

\newcommand{\mmbox}[1]{\mbox{${#1}$}}
\newcommand{\proj}[1]{\mmbox{{\mathbb P}^{#1}}}
\newcommand{\Cr}{C^r(\Delta)}
\newcommand{\CR}{C^r(\hat\Delta)}
\newcommand{\affine}[1]{\mmbox{{\mathbb A}^{#1}}}
\newcommand{\Ann}[1]{\mmbox{{\rm Ann}({#1})}}
\newcommand{\caps}[3]{\mmbox{{#1}_{#2} \cap \ldots \cap {#1}_{#3}}}
\newcommand{\N}{{\mathbb N}}
\newcommand{\Z}{{\mathbb Z}}
\newcommand{\R}{{\mathbb R}}
\newcommand{\Tor}{\mathop{\rm Tor}\nolimits}
\newcommand{\Ext}{\mathop{\rm Ext}\nolimits}
\newcommand{\Hom}{\mathop{\rm Hom}\nolimits}
\newcommand{\im}{\mathop{\rm Im}\nolimits}
\newcommand{\rank}{\mathop{\rm rank}\nolimits}
\newcommand{\supp}{\mathop{\rm supp}\nolimits}
\newcommand{\arrow}[1]{\stackrel{#1}{\longrightarrow}}
\newcommand{\CB}{Cayley-Bacharach}
\newcommand{\coker}{\mathop{\rm coker}\nolimits}
\newcommand{\m}{{\frak m}}
\newcommand{\fitt}{{\rm Fitt}}
\newcommand{\A}{{\mathcal{A}}}
\newcommand{\K}{\mathbb K}
\newcommand{\OT}{R}  
\newcommand{\pan}{{\rm Span }}  
\newcommand{\M}{\mathsf M}
\newcommand{\Ima}{{\rm Im}\,}


\makeatletter
\renewcommand*\env@matrix[1][*\c@MaxMatrixCols c]{%
  \hskip -\arraycolsep
  \let\@ifnextchar\new@ifnextchar
  \array{#1}}
\makeatother


\sloppy
\newtheorem{defn0}{Definition}[section]
\newtheorem{prop0}[defn0]{Proposition}
\newtheorem{quest0}[defn0]{Question}
\newtheorem{thm0}[defn0]{Theorem}
\newtheorem{lem0}[defn0]{Lemma}
\newtheorem{corollary0}[defn0]{Corollary}
\newtheorem{example0}[defn0]{Example}
\newtheorem{remark0}[defn0]{Remark}
\newtheorem{prob0}[defn0]{Problem}

\newenvironment{defn}{\begin{defn0}}{\end{defn0}}
\newenvironment{prop}{\begin{prop0}}{\end{prop0}}
\newenvironment{quest}{\begin{quest0}}{\end{quest0}}
\newenvironment{thm}{\begin{thm0}}{\end{thm0}}
\newenvironment{lem}{\begin{lem0}}{\end{lem0}}
\newenvironment{cor}{\begin{corollary0}}{\end{corollary0}}
\newenvironment{exm}{\begin{example0}\rm}{\end{example0}}
\newenvironment{rem}{\begin{remark0}\rm}{\end{remark0}}
\newenvironment{prob}{\begin{prob0}\rm}{\end{prob0}}

\newcommand{\defref}[1]{Definition~\ref{#1}}
\newcommand{\propref}[1]{Proposition~\ref{#1}}
\newcommand{\thmref}[1]{Theorem~\ref{#1}}
\newcommand{\lemref}[1]{Lemma~\ref{#1}}
\newcommand{\corref}[1]{Corollary~\ref{#1}}
\newcommand{\exref}[1]{Example~\ref{#1}}
\newcommand{\secref}[1]{Section~\ref{#1}}
\newcommand{\remref}[1]{Remark~\ref{#1}}
\newcommand{\questref}[1]{Question~\ref{#1}}
\newcommand{\probref}[1]{Problem~\ref{#1}}

\newcommand{\std}{Gr\"{o}bner}
\newcommand{\jq}{J_{Q}}
\def\Ree#1{{\mathcal R}(#1)}

\numberwithin{equation}{subsection}  


\title{Star Configurations are Set-Theoretic Complete Intersections}
\author{\c{S}tefan O. Toh\v{a}neanu}

\subjclass[2010]{Primary 14N20; Secondary: 16N40} \keywords{star configuration, set-theoretic complete intersection. \\ \indent Tohaneanu's Address: Department of Mathematics, University of Idaho, Moscow, Idaho 83844-1103, USA, Email: tohaneanu@uidaho.edu, Phone: 208-885-6234, Fax: 208-885-5843.}

\begin{abstract} Let $\A\subset\mathbb P^{k-1}$ be a rank $k$ arrangement of $n$ hyperplanes, with the property that any $k$ of the defining linear forms are linearly independent (i.e., $\A$ is called $k-$generic). We show that for any $j=0,\ldots,k-2$, the subspace arrangement with defining ideal generated by the $(n-j)-$fold products of the defining linear forms of $\A$ is a set-theoretic complete intersection, which is equivalent to saying that star configurations have this property.

\end{abstract}
\maketitle

\section{Introduction}

Let $R$ be a commutative unitary Noetherian ring and let $I$ be a proper ideal of $R$. Suppose $ht(I)=m$. Then $I$ is said to be {\em set-theoretic complete intersection} if there exist $f_1,\ldots,f_m\in I$ such that ${\rm rad}(I)={\rm rad}(\langle f_1,\ldots,f_m\rangle)$. For an ideal $J$, ${\rm rad}(J)$ denotes the radical of $J$. A projective scheme is set-theoretic complete intersection if its defining ideal has this property.

The problem of studying this property of schemes (varieties) is of great interest in algebraic geometry. This problem goes back to the 19-th century, when Kronecker and Cayley asked if any complex variety in $\mathbb P^3$ is set-theoretic complete intersection. Hartshorne in \cite{Ha} gives an counterexample for dimension 2 varieties: in $\mathbb C^4$ take the union of two $2-$dimensional planes with one point in common (or in $\mathbb P^3$ take the variety with defining ideal $\langle x_1,x_2\rangle\cap\langle x_3,x_4\rangle$); this is a connected variety which is not set-theoretically complete intersection. This leads to the famous Hartshorne Conjecture: every irreducible curve in $\mathbb P^3$ is set-theoretic complete intersection. At this moment it is not known if the smooth rational quartic in $\mathbb P^3$ (see \cite[Exercise 18.8]{Ei0}) is set-theoretic complete intersection or not.

There is a huge literature on the set-theoretic complete intersection property of ideals / varieties, and we will not present it here. The general idea is that it is difficult to show that set-theoretic complete intersection property is not satisfied (for his example Hartshorne makes use of the nonvanishing of the 3rd local cohomology module), and even more difficult to show that this property is satisfied (one has to exhibit the elements $f_1,\ldots,f_m$). The situation is the same if one restricts to subspace arrangements (as is the case of Hartshorne's example):  see the work of Schenzel and Vogel (\cite{ScVo1}), generalized by Lyubeznik in \cite{Ly}, to show that certain subspace arrangements are not set-theoretic complete intersections, or the methodology developed by Schmitt and Vogel in \cite{ScVo2} to show that in particular some arrangements of subspaces are set-theoretic complete intersections (see Example 4 in their paper).

\medskip

In these notes we show that star configurations are set-theoretic complete intersections.

Our base field is any field $\mathbb K$. Let $\A\subset\mathbb P^{k-1}$ be a rank $k$ hyperplane arrangement with $\ell_i\in R:=\mathbb K[x_1,\ldots,x_k],i=1,\ldots,n$ being the defining linear forms. For $2\leq s\leq k$, we will call $\A$ to be {\em $s-$generic} if and only if any $s$ of the defining linear forms are linearly independent.

Suppose that $\A$ is $k-$generic, and let $H_i=V(\ell_i),i=1,\ldots,n$. For $1\leq c\leq k-1$, consider {\em the star configuration $V_c$}:
$$V_c= \bigcup_{1\leq i_1<\cdots< i_c\leq n} H_{i_1}\cap \cdots \cap H_{i_c}.$$ This subspace arrangement has defining ideal $$I_{V_c}=\bigcap_{1\leq i_1<\cdots< i_c\leq n} \langle \ell_{i_1}, \ldots, \ell_{i_c} \rangle,$$ which is of height $c$.

If $c=1$, then $I_{V_1}=\langle \ell_1\cdots\ell_n\rangle$, which is a complete intersection ideal, so it is also a set-theoretic complete intersection. If $c=k-1$, then $V_{k-1}$ consists of a finite number of points (which are obviously rational points), and by \cite[Chapter V, Examples 3.13]{Ku}, this variety is set-theoretic complete intersection. The question is about what happens in-between.

In $R$ define the ideal $I(\A,a)=\langle\{\ell_{i_1}\cdots\ell_{i_a}|1\leq i_1<\cdots<i_a\leq n\}\rangle$. This is an ideal generated by {\em $a-$fold products of linear forms}. By Lemma \ref{lemma2} and from the classical theorem that the radical of an ideal is the intersection of its minimal primes, it becomes clear that $$I_{V_c}={\rm rad}(I(\A,n-c+1)), c=1,\ldots,k-1.$$ In fact, by using \cite[Proposition 2.9]{GHM}, one can show $I_{V_c}=I(\A,n-c+1)$, but this fact is not crucial for the main result.

We are going to show that $V_c$ are set-theoretic complete intersections, for any $c=1,\ldots,k-1$, by showing that the ideals $I(\A,n-c+1)$ have this property (Theorem \ref{main}).

\section{Main result}

If $I$ is an ideal in a commutative ring $R$, then the {\em radical of $I$} is by definition ${\rm rad}(I)=\{f\in R|f^n\in I\mbox{ for some }n\geq 1\}$.

For some $g_1,\ldots,g_s\in R$, by ${\rm rad}(g_1,\ldots,g_n)$ we denote the radical of the ideal generated by $g_1,\ldots,g_s$. Also, if $g\in R$ and $J$ is an ideal of $R$, by $\langle g, J\rangle$ we denote $\langle g\rangle+J$.

\medskip

We begin with a natural lemma.

\begin{lem}\label{lemma1} Let $R$ be a commutative unitary Noetherian ring. Let $I\subset R$ be an ideal, and let $f,g\in R$ be any two elements. Then, $${\rm rad}(fg,I)={\rm rad}(f,I)\cap {\rm rad}(g,I).$$
\end{lem}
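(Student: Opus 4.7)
The plan is to prove the two inclusions directly by elementary manipulation of the definition of the radical; no deep machinery is needed.

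For the inclusion $\mathrm{rad}(fg,I)\subseteq \mathrm{rad}(f,I)\cap \mathrm{rad}(g,I)$, I would take $h\in \mathrm{rad}(fg,I)$ and write $h^n=afg+i$ for some $a\in R$ and $i\in I$. Reading this as $h^n=(ag)f+i$ shows $h^n\in\langle f,I\rangle$, hence $h\in \mathrm{rad}(f,I)$; symmetrically, reading it as $h^n=(af)g+i$ yields $h\in \mathrm{rad}(g,I)$. This inclusion does not even require $R$ to be Noetherian.

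For the reverse inclusion, I would take $h\in \mathrm{rad}(f,I)\cap \mathrm{rad}(g,I)$, pick exponents $n,m\geq 1$ with $h^n=af+i_1$ and $h^m=bg+i_2$ (where $a,b\in R$ and $i_1,i_2\in I$), and multiply:
\[
h^{n+m}=(af+i_1)(bg+i_2)=ab\,fg+af\,i_2+bg\,i_1+i_1i_2.
\]
The last three summands lie in $I$, so $h^{n+m}\in\langle fg,I\rangle$ and therefore $h\in \mathrm{rad}(fg,I)$.

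I expect the main obstacle to be nothing more than presentation: the only subtlety worth flagging is that Noetherianity is not needed anywhere in this lemma (the hypothesis is carried over from the overall setting of the paper). An alternative route through the characterization $\mathrm{rad}(J)=\bigcap_{P\supseteq J}P$ over prime ideals would also work cleanly, using that a prime $P$ contains $\langle fg,I\rangle$ iff $P\supseteq\langle f,I\rangle$ or $P\supseteq\langle g,I\rangle$, but the direct computation above is shorter and avoids invoking Zorn's lemma.
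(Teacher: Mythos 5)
Your proof is correct, and both directions are argued soundly: the forward inclusion is the trivial observation that $\langle fg,I\rangle\subseteq\langle f,I\rangle\cap\langle g,I\rangle$, and the reverse inclusion is the standard trick of multiplying the two power expressions $h^n=af+i_1$ and $h^m=bg+i_2$. You are also right that Noetherianity plays no role. The paper reaches the same conclusion by a slightly more packaged route: it invokes the general identity ${\rm rad}(J_1\cdot J_2)={\rm rad}(J_1\cap J_2)={\rm rad}(J_1)\cap{\rm rad}(J_2)$ for arbitrary ideals, applies it with $J_1=\langle f,I\rangle$ and $J_2=\langle g,I\rangle$, and then closes the circle of inclusions using the computation $\langle f,I\rangle\cdot\langle g,I\rangle=\langle fg,fI,gI,I^2\rangle\subseteq\langle fg,I\rangle$. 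The underlying mechanism is identical --- your product $h^{n+m}=ab\,fg+af\,i_2+bg\,i_1+i_1i_2$ is exactly the element-level shadow of the containment ${\rm rad}(J_1)\cap{\rm rad}(J_2)\subseteq{\rm rad}(J_1\cdot J_2)$ and of $J_1J_2\subseteq\langle fg,I\rangle$ --- so your version is essentially an unpacked, self-contained form of the paper's argument. What your presentation buys is elementarity (no appeal to the general radical-of-product identity, and no detour through prime ideals); what the paper's buys is brevity and the reusability of the general identity, which it cites again implicitly elsewhere. Either is acceptable; there is no gap.
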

\begin{proof} For any two ideals $J_1,J_2\subset R$, one has $${\rm rad}(J_1\cdot J_2)={\rm rad}(J_1\cap J_2)={\rm rad}(J_1)\cap{\rm rad}(J_2).$$ The last equality is well-known. The first equality comes from the fact that $J_1\cdot J_2\subseteq J_1\cap J_2$, and from the fact that if $f\in {\rm rad}(J_1\cap J_2)$, then there is a positive integer $m$ such that $f^m\in J_1$ and $f^m\in J_2$, leading to $f^{2m}\in J_1\cdot J_2$.

With this we have the following sequence of inclusions / equalities
$${\rm rad}(fg,I)\subseteq {\rm rad}(f,I)\cap {\rm rad}(g,I)={\rm rad}(\langle f, I\rangle\cdot\langle g, I\rangle)={\rm rad}(fg, fI,gI,I^2)\subseteq {\rm rad}(fg,I).$$ Hence the claim.
\end{proof}

\begin{lem}\label{lemma2} Let $\A\subset\mathbb P^{k-1}$ be a rank $k$ hyperplane arrangement with defining linear forms $\ell_1,\ldots,\ell_n\in R:=\mathbb K[x_1,\ldots,x_k]$. Let $j\in\{0,\ldots,k-2\}$.

Then, any minimal prime over $I(\A,n-j)$ is of the form $\langle \ell_{i_1},\ldots,\ell_{i_{j+1}}\rangle$, for some $1\leq i_1<\cdots< i_{j+1}\leq n$. Furthermore, $ht(I(\A,n-j))\leq j+1$, and in particular, if $\A$ is $k-$generic, then $ht(I(\A,n-j))=j+1$.
\end{lem}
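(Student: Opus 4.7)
The plan is to describe minimal primes over $I(\A,n-j)$ by first translating the containment $I(\A,n-j) \subseteq P$ (for $P$ prime) into a combinatorial condition on the set $S_P := \{i \in \{1,\ldots,n\} : \ell_i \in P\}$. I claim $P \supseteq I(\A,n-j)$ if and only if $|S_P| \geq j+1$. For the ``if'' direction, any generator $\ell_{i_1}\cdots\ell_{i_{n-j}}$ involves $n-j$ indices, and since the complement $\{1,\ldots,n\}\setminus S_P$ has size at most $n-(j+1) < n-j$, at least one index $i_t$ lies in $S_P$, so $\ell_{i_t} \in P$, and the product is in $P$. For ``only if'': if $|S_P| \leq j$, then there is an $(n-j)$-subset disjoint from $S_P$; the corresponding generator lies in $I(\A,n-j) \subseteq P$, but none of its factors are in $P$, contradicting primality.

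Next, I would observe that for any subset $T \subseteq \{1,\ldots,n\}$ with $|T|=j+1$, the ideal $P_T := \langle \ell_i : i \in T\rangle$ is a prime ideal (being generated by linear forms, it equals the vanishing ideal of a linear subspace), and by the ``if'' direction above it contains $I(\A,n-j)$. Now let $P$ be a minimal prime over $I(\A,n-j)$. Then $|S_P|\geq j+1$, and choosing any $(j+1)$-subset $T \subseteq S_P$, I get $P_T \subseteq P$, with $P_T$ prime and $P_T \supseteq I(\A,n-j)$. Minimality of $P$ forces $P = P_T$, giving the required description of minimal primes.

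For the height statement, fix any $(j+1)$-subset $T$. Since $I(\A,n-j) \subseteq P_T$ and $P_T$ is generated by $j+1$ linear forms, $ht(I(\A,n-j)) \leq ht(P_T) \leq j+1$. When $\A$ is $k$-generic, any $k$ of the linear forms are linearly independent; since $j+1 \leq k-1 < k$, any $j+1$ of them are linearly independent as well, so each $P_T$ is the ideal of a codimension $(j+1)$ linear subspace and hence $ht(P_T) = j+1$. Because every minimal prime of $I(\A,n-j)$ has the form $P_T$, every minimal prime has height exactly $j+1$, and consequently $ht(I(\A,n-j)) = j+1$.

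No step presents a serious obstacle; the main conceptual point is the combinatorial translation in the first paragraph, which reduces the problem of containing all $(n-j)$-fold products to a simple cardinality condition on $S_P$. Everything else follows by choosing a $(j+1)$-subset of $S_P$ and exploiting that ideals generated by linear forms are automatically prime and have the expected height under the $k$-generic hypothesis.
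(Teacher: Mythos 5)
Your proposal is correct and follows essentially the same route as the paper: the paper's recursive selection of generators avoiding already-found forms is exactly your pigeonhole argument that a prime containing $I(\A,n-j)$ must contain at least $j+1$ of the $\ell_i$, followed by the same minimality step. Your packaging via the set $S_P$ and the clean ``iff'' characterization is a slightly tidier write-up of the identical idea, so there is nothing substantive to flag.
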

\begin{proof} The proof follows the same argument from the beginning of Section 2 in \cite{To1}. The idea is the following: let $\frak p$ be a minimal prime over $I(\A,n-j)$. So $\ell_1\cdots\ell_{n-j}\in\frak p$, giving that $\ell_{i_0}\in\frak p$, for some $i_0\in\{1,\ldots,n-j\}$. Let's assume $i_0=1$. Next consider $\ell_2\cdots\ell_{n-j+1}\in\frak p$, and do the same obtaining $\ell_2\in\frak p$. Recursively, one obtains $\ell_1,\ell_2,\ldots,\ell_{j+1}\in\frak p$. But $I(\A,n-j)\subset \langle \ell_1,\ldots,\ell_{j+1}\rangle\subseteq \frak p$. Since $\frak p$ was taken to be a minimal prime, then the last inclusion is in fact equality. We mention here as well that in general $\ell_1,\ldots,\ell_{j+1}$ will \underline{not} minimally generate $\frak p$.

The remaining part of the statement is immediate from the definition of the height of an ideal.
\end{proof}

The radical of an ideal is the intersection of its minimal prime ideals (see \cite[Corollary 2.12]{Ei0}). With this important result in mind, coupled with Lemma \ref{lemma2}, we can begin proving the main results of this note.

Unless one talks about multiarrangements (and this is not the case here), any hyperplane arrangement is $2-$generic. We have the following proposition.

\begin{prop} \label{prop1} Let $\A\subset\mathbb P^{k-1}$ be a rank $k$ hyperplane arrangement with defining linear forms $\ell_1,\ldots,\ell_n\in R:=\mathbb K[x_1,\ldots,x_k]$, and with $\gcd(\ell_i,\ell_j)=1,i\neq j$. Then $${\rm rad}(I(\A,n-1))={\rm rad}(\ell_1(\ell_3\cdots\ell_n+\cdots+ \ell_2\cdots\ell_{n-1}),\ell_2\cdots\ell_n).$$
\end{prop}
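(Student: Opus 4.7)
The plan is to reduce both sides to the intersection $\bigcap_{1 \le i < j \le n} \langle \ell_i, \ell_j \rangle$ and then match them. Let me set $p_i := \ell_1 \cdots \widehat{\ell_i} \cdots \ell_n$, so that $I(\A, n-1) = \langle p_1, \ldots, p_n \rangle$. The first preliminary step is a bookkeeping observation: each summand $\ell_3 \cdots \ell_n, \ldots, \ell_2 \cdots \ell_{n-1}$ inside the parentheses is a product missing both $\ell_1$ and some $\ell_i$ with $i \in \{2,\ldots,n\}$, so multiplying through by $\ell_1$ gives
\[
\ell_1(\ell_3 \cdots \ell_n + \cdots + \ell_2 \cdots \ell_{n-1}) = p_2 + p_3 + \cdots + p_n, \qquad \ell_2 \cdots \ell_n = p_1.
\]
So the claim becomes $\mathrm{rad}(I(\A, n-1)) = \mathrm{rad}(p_1,\, p_2 + \cdots + p_n)$.

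For the left-hand side I invoke Lemma \ref{lemma2} with $j = 1$: the minimal primes over $I(\A, n-1)$ all have the form $\langle \ell_i, \ell_j \rangle$, and the coprimality hypothesis guarantees that any two coordinate linear forms $\ell_i, \ell_j$ are linearly independent, so each $\langle \ell_i, \ell_j \rangle$ is a genuine height-$2$ prime. Combined with the standard fact that the radical equals the intersection of the minimal primes, this yields
\[
\mathrm{rad}(I(\A, n-1)) \;=\; \bigcap_{1 \le i < j \le n} \langle \ell_i, \ell_j \rangle.
\]

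For the right-hand side, I would apply Lemma \ref{lemma1} iteratively to the factorization $p_1 = \ell_2 \cdot \ell_3 \cdots \ell_n$, pulling the factors out one at a time with $I = \langle p_2 + \cdots + p_n \rangle$, to get
\[
\mathrm{rad}(p_1,\, p_2 + \cdots + p_n) \;=\; \bigcap_{j=2}^{n} \mathrm{rad}\bigl(\ell_j,\, p_2 + \cdots + p_n\bigr).
\]
The crucial algebraic observation — which I expect to be the only subtle step — is that for any fixed $j \in \{2, \ldots, n\}$ and any $k \ne j$, the form $\ell_j$ divides $p_k$; consequently modulo $\ell_j$ the entire sum $p_2 + \cdots + p_n$ collapses to $p_j$. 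Hence $\langle \ell_j,\, p_2 + \cdots + p_n\rangle = \langle \ell_j,\, p_j\rangle = \langle \ell_j,\, \ell_1 \cdots \widehat{\ell_j} \cdots \ell_n\rangle$, and applying Lemma \ref{lemma1} a second time to the factorization of $p_j$ (together with the fact that each $\langle \ell_i, \ell_j\rangle$ is prime, so already radical) gives
\[
\mathrm{rad}(\ell_j, p_j) \;=\; \bigcap_{i \ne j} \langle \ell_i, \ell_j\rangle.
\]

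Finally I would combine the two displays: letting $j$ range over $\{2,\ldots,n\}$ and $i$ over $\{1,\ldots,n\}\setminus\{j\}$ sweeps out every unordered pair $\{i,j\} \subset \{1,\ldots,n\}$ exactly once (since any such pair has a larger element in $\{2,\ldots,n\}$). Therefore
\[
\mathrm{rad}(p_1,\, p_2 + \cdots + p_n) \;=\; \bigcap_{j=2}^{n} \bigcap_{i \ne j} \langle \ell_i, \ell_j\rangle \;=\; \bigcap_{1 \le i < j \le n} \langle \ell_i, \ell_j\rangle,
\]
which matches the expression obtained for $\mathrm{rad}(I(\A,n-1))$. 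The hard part is really the mod-$\ell_j$ reduction of the long sum to the single term $p_j$; once that is in hand, both sides unfold into the same intersection via two applications of Lemma \ref{lemma1}.
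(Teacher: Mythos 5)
Your proof is correct and follows essentially the same route as the paper's: Lemma~\ref{lemma2} identifies the left side with $\bigcap_{1\le i<j\le n}\langle\ell_i,\ell_j\rangle$, and iterated applications of Lemma~\ref{lemma1} together with the observation that the sum collapses to a single term modulo each $\ell_j$ unfold the right side into the same intersection. The only cosmetic difference is the order of splitting: the paper first separates the factor $\ell_1$ from the sum, obtaining ${\rm rad}(\ell_1,\ell_2\cdots\ell_n)\cap{\rm rad}(\ell_3\cdots\ell_n+\cdots+\ell_2\cdots\ell_{n-1},\ell_2\cdots\ell_n)$, and only then splits $\ell_2\cdots\ell_n$, whereas you split $\ell_2\cdots\ell_n$ first; both orders produce the same pairs.
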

\begin{proof} From Lemma \ref{lemma2}, all the minimal primes over $I(\A,n-1)$ are of the form $\langle \ell_i,\ell_j\rangle, i\neq j$, hence $${\rm rad}(I(\A,n-1))=\bigcap_{1\leq i<j\leq n}\langle \ell_i,\ell_j\rangle.$$

On the other hand, from Lemma \ref{lemma1}, one has $${\rm rad}(\ell_1(\ell_3\cdots\ell_n+\cdots+ \ell_2\cdots\ell_{n-1}),\ell_2\cdots\ell_n)= I_1\cap I_2,$$ where $I_1:={\rm rad}(\ell_1,\ell_2\cdots\ell_n)$ and $I_2:={\rm rad}(\ell_3\cdots\ell_n+\cdots+ \ell_2\cdots\ell_{n-1},\ell_2\cdots\ell_n)$.

Again from Lemma \ref{lemma1}, one has $$I_1=\bigcap_{a=2}^n\langle \ell_1,\ell_a\rangle,$$ and
\begin{eqnarray}
I_2&=&\bigcap_{b=2}^n{\rm rad}(\ell_3\cdots\ell_n+\cdots+ \ell_2\cdots\ell_{n-1},\ell_b)\nonumber\\
&=& \bigcap_{b=2}^n{\rm rad}(\ell_2\cdots\widehat{\ell_b}\cdots\ell_n,\ell_b)\nonumber\\
&=& \bigcap_{2\leq i\neq j\leq n}\langle \ell_i,\ell_j\rangle.\nonumber
\end{eqnarray} Hence the claim.
\end{proof}

\medskip

The main result of the notes is the following.

\begin{thm} \label{main} Let $\A\subset\mathbb P^{k-1}$ be a rank $k$ hyperplane arrangement with defining linear forms $\ell_1,\ldots,\ell_n\in R:=\mathbb K[x_1,\ldots,x_k]$. Suppose $\A$ is $k-$generic, and let $j\in\{1,\ldots,k-2\}$. Then
$${\rm rad}(I(\A,n-j))={\rm rad}(F_1,\ldots,F_j,\ell_{j+1}\cdots\ell_n),$$ where $\displaystyle F_u:=\ell_u\left(\sum_{I\subseteq\{u+1,\ldots,n\}, |I|=n-j-1}\ell_I\right)$, for $u=1,\ldots,j$, and $\ell_I:=\prod_{i\in I}\ell_i$.
\end{thm}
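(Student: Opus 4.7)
I would prove the theorem by induction on $j$, with the base case $j=1$ supplied by Proposition \ref{prop1}. Writing $P_S:=\langle\ell_s:s\in S\rangle$, Lemma \ref{lemma2} together with $k$-genericity identifies ${\rm rad}(I(\A,n-j))=\bigcap_{|S|=j+1}P_S$, so it suffices to compute the radical of $J:=\langle F_1,\ldots,F_j,\ell_{j+1}\cdots\ell_n\rangle$ and match it against this intersection of primes.

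For the inductive step $j\ge 2$, apply Lemma \ref{lemma1} iteratively to the factorization of the last generator $\ell_{j+1}\cdots\ell_n$ into its $n-j$ linear factors to obtain
$${\rm rad}(J)=\bigcap_{i=j+1}^n{\rm rad}(F_1,\ldots,F_j,\ell_i).$$
Fix $i\in\{j+1,\ldots,n\}$ and pass to the quotient $R/\langle\ell_i\rangle$, which after a linear change of coordinates is a polynomial ring in $k-1$ variables. The surviving $n-1$ forms $\bar\ell_s$, $s\ne i$, constitute a $(k-1)$-generic arrangement $\bar\A^{(i)}\subset\mathbb P^{k-2}$: any $k-1$ of them, together with $\ell_i$, would otherwise lift to a linear dependence among $k$ of the original $\ell$'s, violating $k$-genericity. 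Modulo $\ell_i$ every term $\ell_I$ with $i\in I$ vanishes, so for $u<j$ one has $\bar F_u=\bar\ell_u\cdot\sum_{I\subseteq\{u+1,\ldots,n\}\setminus\{i\},\,|I|=n-j-1}\bar\ell_I$, while for $u=j$ the indexing set $\{j+1,\ldots,n\}\setminus\{i\}$ has exactly $n-j-1$ elements, so the inner sum collapses to a single term and $\bar F_j=\prod_{s\in\{j,\ldots,n\}\setminus\{i\}}\bar\ell_s$. After relabeling the surviving forms in order as $\bar m_1,\ldots,\bar m_{n-1}$, one checks that $\bar F_u$ for $u<j$ becomes $\bar m_u\cdot\sum_{I'\subseteq\{u+1,\ldots,n-1\},\,|I'|=n-j-1}\bar m_{I'}$ and $\bar F_j=\bar m_j\cdots\bar m_{n-1}$, so that $(\bar F_1,\ldots,\bar F_j)$ is precisely the list of generators prescribed by the theorem for $\bar\A^{(i)}$ at parameter $j-1$.

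Invoking the inductive hypothesis — with Proposition \ref{prop1} covering the case $j-1=1$ — and Lemma \ref{lemma2} for $\bar\A^{(i)}$ then gives
$${\rm rad}(\bar F_1,\ldots,\bar F_j)=\bigcap_{\substack{S'\subseteq\{1,\ldots,n\}\setminus\{i\}\\ |S'|=j}}\langle\bar\ell_s:s\in S'\rangle.$$
Lifting back to $R$ yields ${\rm rad}(F_1,\ldots,F_j,\ell_i)=\bigcap_{|S|=j+1,\,i\in S}P_S$, and intersecting over all $i\in\{j+1,\ldots,n\}$ produces $\bigcap_{|S|=j+1}P_S$, since by pigeonhole every $(j+1)$-subset of $\{1,\ldots,n\}$ must meet $\{j+1,\ldots,n\}$. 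This closes the induction. The main obstacle I expect is the bookkeeping in this reduction step — verifying that after eliminating $\ell_i$ and relabeling, the images $\bar F_1,\ldots,\bar F_j$ line up exactly with the theorem's generators at parameter $j-1$, and in particular that $\bar F_j$ collapses to the required single product of $n-j$ forms. Once that identification is made, the argument assembles transparently from Lemmas \ref{lemma1} and \ref{lemma2} and the descent of genericity from $k$ to $k-1$.
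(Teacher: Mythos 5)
Your proof is correct, but it runs along a genuinely different track from the paper's. Both arguments are inductions on $j$ with Proposition \ref{prop1} as the base case, and both hinge on the same key observation: killing one linear factor $\ell_i$ ($i\ge j+1$) of the last generator collapses $F_j$ to the single product $\prod_{s\in\{j,\ldots,n\}\setminus\{i\}}\ell_s$ and turns $F_1,\ldots,F_{j-1}$ into the theorem's generators at parameter $j-1$ for the arrangement with $\ell_i$ removed. The difference is how this is deployed. The paper fixes an arbitrary minimal prime $\frak p$ of $J:=\langle F_1,\ldots,F_j,\ell_{j+1}\cdots\ell_n\rangle$, extracts some $\ell_v\in\frak p$, writes $F_u=G_u+\ell_vQ$ so that the $G_u$ lie in $\frak p$, and applies the induction hypothesis to the \emph{deleted} arrangement $\A\setminus\{\ell_v\}$ inside the same ring $R$; Lemma \ref{lemma2} then forces $\frak p$ to be a minimal prime of $I(\A,n-j)$, and the reverse inclusion is just the containment $J\subseteq I(\A,n-j)$. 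You instead split ${\rm rad}(J)$ via Lemma \ref{lemma1} into $\bigcap_{i=j+1}^{n}{\rm rad}(F_1,\ldots,F_j,\ell_i)$, pass to the quotient $R/\langle\ell_i\rangle$ (\emph{restriction} rather than deletion), apply the induction hypothesis one dimension down, and lift back; the pigeonhole fact that every $(j+1)$-subset of $\{1,\ldots,n\}$ meets $\{j+1,\ldots,n\}$ then reassembles the full intersection $\bigcap_{|S|=j+1}\langle\ell_s:s\in S\rangle$. Your route computes ${\rm rad}(J)$ outright as an explicit intersection of primes, so both inclusions come at once, at the price of the relabeling bookkeeping (which does check out: with $n'=n-1$ and $j'=j-1$ one has $n'-j'-1=n-j-1$, and since $i\ge j+1$ the labels $1,\ldots,j$ are undisturbed) and of verifying that $k$-genericity descends to $(k-1)$-genericity of the restriction, which your lift-a-dependence argument handles correctly; note this means your induction must be quantified over all $k$ simultaneously, whereas the paper's deletion stays in $\mathbb P^{k-1}$. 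The one assertion you make slightly beyond the literal content of Lemma \ref{lemma2} --- that ${\rm rad}(I(\A,n-j))$ equals the intersection of \emph{all} the primes $\langle\ell_s:s\in S\rangle$ with $|S|=j+1$, not merely of the minimal ones among them --- needs the easy remark that each such prime contains $I(\A,n-j)$ (pigeonhole again); the paper makes the same silent inference in the proof of Proposition \ref{prop1}, so this is not a gap.
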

\begin{proof} We prove the theorem by induction on $j\geq 1$.

\medskip

\noindent {\bf Base Step:} $j=1$. This is the case of Proposition \ref{prop1}.

\medskip

\noindent {\bf Induction Step:} $j\geq 2$.

Let $\frak p$ be a minimal prime over $\langle F_1,\ldots,F_j,\ell_{j+1}\cdots\ell_n\rangle$. Then there exists $v\in\{j+1,\ldots,n\}$ such that $\ell_v\in \frak p$.

For each $u=1,\ldots,j$, one has $$F_u=\underbrace{\ell_u\left(\sum_{I\subseteq\{u+1,\ldots,n\}\setminus\{v\}, |I|=n-j-1}\ell_I\right)}_{G_u}+\ell_vQ,\mbox{ for some }Q\in R.$$ So $G_u\in\frak p,u=1,\ldots,j$.

For $u=j$, $|\{u+1,\ldots,n\}\setminus\{v\}|=n-j-1$, and so $G_j=\ell_j\cdots\widehat{\ell_v}\cdots\ell_n$.

Denote $\A_v=\A\setminus\{\ell_v\}$. We have $|\A_v|=n-1$, and $\A_v$ is $k-$generic if $rank(\A_v)=k$, or $\A_v$ is $(k-1)-$generic if $rank(\A_v)=k-1$. By induction, $${\rm rad}(I(\A_v,\underbrace{(n-1)-(j-1)}_{n-j}))={\rm rad}(G_1,\ldots,G_{j-1},\ell_j\cdots\widehat{\ell_v}\cdots\ell_n).$$ So $I(\A_v,n-j)\subset\frak p$, which, from Lemma \ref{lemma2}, leads to the existence of $i_1<\cdots<i_j$ in $\{1,\ldots,n\}\setminus\{v\}$ with $$I(\A_v,n-j)\subset\langle \ell_{i_1},\ldots,\ell_{i_j}\rangle\subset\frak p.$$

Since $\ell_v\in\frak p$, and since $\frak p$ was taken to be minimal prime, then $$I(\A,n-j)\subset \langle I(\A_v,n-j),\ell_v\rangle\subset \langle \ell_{i_1},\ldots,\ell_{i_j},\ell_v\rangle=\frak p.$$ Again from Lemma \ref{lemma2} one obtains that $\frak p$ is a minimal prime over $I(\A,n-j)$. Therefore, every minimal prime over $\langle F_1,\ldots,F_j,\ell_{j+1}\cdots\ell_n\rangle$ is also a minimal prime over $I(\A,n-j)$, giving that $${\rm rad}(F_1,\ldots,F_j,\ell_{j+1}\cdots\ell_n)\supseteq {\rm rad}(I(\A,n-j)).$$ The reverse inclusion that will complete the proof comes from observing that $\langle F_1,\ldots, F_j,\ell_{j+1}\cdots\ell_n\rangle\subset I(\A,n-j)$.
\end{proof}

\begin{rem} The {\em arithmetic rank} of an ideal $I$, denoted $ara(I)$, is the minimum number of elements in $R$ that generate $I$ up to its radical ideal. So $I$ is set-theoretic complete intersection if and only if $ara(I)=ht(I)$ (in general one has $\geq$ happening).

In \cite{Ly} lower bounds for $ara(I)$ are presented (under certain conditions), allowing one to decide if $I$ is not a set-theoretic complete intersection, whereas in \cite{ScVo2} upper bounds of this invariant are given (also under certain conditions), which can be used to show that $I$ is a set-theoretic complete intersection. \cite[Theorem 1]{ScVo2} is even more detailed by presenting a formula for $ara(I)$. One would be tempted to use this result to show that star configurations are set-theoretic complete intersection, but an immediate application (meaning taking $a_{ij}$ in their paper to be our $\ell_i$'s) does not work, as assumption (ii) is not satisfied.

Nevertheless, very useful for our discussion is \cite[Lemma]{ScVo2}, which we are presenting next: let $R$ be a commutative ring with non-zero identity. Let $P$ be a finite subset of elements of $R$. Let $P_0,\ldots, P_r$ be subsets of $P$ such that
\begin{enumerate}
  \item[(i)] $\displaystyle\bigcup_{l=0}^r P_l=P$;
  \item[(ii)] $P_0$ has exactly one element;
  \item[(iii)] if $p$ and $p''$ are different elements of $P_l (0<l\leq r)$, there is an integer $l'$ with $0\leq l'<l$, and an element $p'\in P_{l'}$ such that $p'|p\cdot p''$.
\end{enumerate} Setting $\displaystyle q_l=\sum_{p\in P_l}p^{e(p)},$ where $e(p)\geq 1$ are arbitrary integers, then $${\rm rad}(P)={\rm rad}(q_0,\ldots,q_r).$$

This result provides an alternative proof of our Proposition \ref{prop1}. If we take $P_0=\{\ell_2\cdots\ell_n\}$, and $P_1=\{\ell_1\ell_3\cdots\ell_n,\ldots,\ell_1\ell_2\cdots \ell_{n-1}\}$, the assumptions (i), (ii) and (iii) above are satisfied, hence, by taking $e(p)=1$, one obtains $${\rm rad}(P_0\cup P_1)={\rm rad}(\ell_2\cdots\ell_n,\ell_1\ell_3\cdots\ell_n+\cdots+\ell_1\ell_2\cdots \ell_{n-1}).$$

Furthermore, this lemma helps prove that, for any hyperplane arrangement $\mathcal A$, $$ara(I(\A,n-j))\leq j+1.$$ One can see this by taking $P_0:=\{\ell_{j+1}\cdots\ell_n\}$, and for $u=1,\ldots,j$, $P_u:=\{\ell_{j-u+1}\ell_I|I\subset \{j-u+2,\ldots,n\},|I|=n-j-1\}$. Assumptions (i) and (ii) are immediately satisfied, and with a bit of work, also condition (iii) can be verified.

In general $ht(I(\A,n-j))\leq \min\{k-1,j+1\}, j=0,\ldots,n-d-1$, where $d$ is the minimum distance of a linear code built from $\A$ (see \cite{To1} for more details). In the case of $\A$ being $k-$generic, $d=n-k+1$, and furthermore, from Lemma \ref{lemma2} one has $ht(I(\A,n-j))=j+1$, this way obtaining an alternative proof of our Theorem \ref{main}.

We can generalize this to $\A$ being $s-$generic, $2\leq s\leq k$, obtaining that for any $j\in\{0,\ldots,s-2\}$ the ideals $I(\A,n-j)$ are set-theoretic complete intersections.
\end{rem}

\medskip

We end with the following question: for any hyperplane arrangement $\A$, are all ideals generated by $a-$fold products of linear forms set-theoretic complete intersections?

\medskip

\noindent {\bf Update (01/25/2016):} The answer to the above question is NO. Consider $\A\subset\mathbb P^3$ with defining linear forms $\{x,y,x+y,z,w,z+w\}$. We have $${\rm rad}(I(\A,4))=\langle x,y\rangle\cap\langle z,w\rangle,$$ and this is the case of Hartshorne's example. For all the other values of $a\in\{1,\ldots,6\}$, the ideal $I(\A,a)$ is set-theoretic complete intersection: (1) if $a=1,2$, we have $ht(I(\A,a))=4$ because the minimum distance of the associated linear code is $2$ (see, for example, \cite{To1}), hence s.t.c.i.; (2) if $a=3$, we have $ht(I(\A,a))=3$, hence the variety of this ideal consists of a finite set (six for this example) of rational points and by \cite[Chapter V, Examples 3.13]{Ku} this is s.t.c.i.; (3) if $a=5$, $I(\A,a)$ is s.t.c.i. by Proposition \ref{prop1}; (4) if $a=6$, then $I(\A,a)=\langle xy(x+y)zw(z+w)\rangle$ which is a complete intersection.

Furthermore, one can impose various combinatorial conditions on $\A$, such as $\A$ being supersolvable or being a graphic arrangement (see \cite{OrTe} for definitions and details), and ask the same question whether or not all ideals generated by $a-$fold products of linear forms defining $\A$ are set-theoretic complete intersections. The same example will also answer this question negatively: one can view this arrangement as the graphic arrangement associated to either the graph consisting of two disjoint triangles, or the graph consisting of two triangles with a vertex in common. In both cases make the change of variables assigning variables $x,y$ to two linear forms corresponding to two edges of one triangle, and variables $z,w$ to two linear forms corresponding to two edges of the other triangle. The theory says that $\A$ is a free arrangement which is equivalent to being supersolvable.

\vskip .1in

\noindent{\bf Acknowledgement} We are very grateful to the anonymous referee for all the corrections and suggestions.

\renewcommand{\baselinestretch}{1.0}
\small\normalsize 

\bibliographystyle{amsalpha}

\end{document}